\newtheorem{thrm}{Theorem}[section]
\newtheorem{prop}[thrm]{Proposition}
\theoremstyle{definition}
\newtheorem{definition}[thrm]{Definition}
\newtheorem{remark}[thrm]{Remark}
\numberwithin{equation}{section}
\email{hzoubeir2014@gmail.com}
\begin{document}
\address{ }
\author{Hicham Zoubeir}
\address{Ibn Tofail University, Department of Mathematics, Faculty of
Sciences, P.O.B : $133,$ Kenitra, Morocco.}
\title[Solvability of some nonlinear FFDE]{Solvability in Gevrey classes of
some nonlinear fractional functional differential equations}

\begin{abstract}
Our purpose in this paper is to prove, under some regularity conditions on
the datas, the solvability in a Gevrey class of bound $-1$\ on the interval $%
\left[ -1,1\right] $ of a class of nonlinear fractional functional
differential equations.
\end{abstract}

\dedicatory{$\emph{This}$ $\emph{modest}$ $\emph{work}$ $\emph{is}$ $\emph{%
dedicated}$ $\emph{to}$ $\emph{the}$ $\emph{memory}$ $\emph{of}$ $\emph{our}$
$\emph{beloved}$ $\emph{master}$ $\emph{Ahmed}$ $\emph{Intissar}$ $\emph{%
(1951-2017),}$ $\emph{a}$ $\emph{brilliant}$ $\emph{mathematician}$ $(\emph{%
PhD}$ $\emph{at}$ $\emph{M.I.T,}$ \emph{Cambridge}$\emph{),}\  \emph{a}$ $%
\emph{distinguished}$ $\emph{professor,}$ $\emph{a}$ $\emph{man}$ $\emph{with%
}$ $\emph{a}$ $\emph{golden}$ $\emph{heart.}$\emph{\ }}
\subjclass[2010]{30D60, 34A08.}
\keywords{Gevrey classes, Fractional functional differential equations. }
\maketitle

\section{\textbf{Introduction}}

The fractional calculus has grown up from the speculations of early
mathematicians of the $17^{\text{th}}$ and $18^{\text{th }}$centuries like
G. W. Leibnitz, I. Newton, L. Euler, G. F. de L'Hospital, J. L. Lagrange (%
\cite{OLDH}). In the $19^{\text{th }}$century, other eminent mathematicians
like P. S. Laplace, J. Liouville, B. Riemann, E. A. Holmgren, O. Heaviside,
A. Grunwald, A. Letnikov, J. B. J. Fourier, N. H. Abel have used the ideas
of fractional calculus to solve some physical or mathematical problems (\cite%
{OLDH}). In the $20^{\text{th }}$century, several mathematicians (S.
Pincherle, O. Heaviside, G. H. Hardy, H. Weyl, E. Post, T. J. Fa Bromwich,
A. Zygmund, A. Erdelyi, R. G. Buschman, M. Caputo etc.) have made
considerable progress in their quest for rigor and generality, to build the
fractional calculus and its applications on rigorous and solid mathematical
foundations (\cite{OLDH}). Actually the fractional calculus allows the
mathematical modeling of social and natural phenomena in a more powerful way
than the classical calculus. Indeed fractional calculus has a lot of
applications in different areas of pure and applied sciences like
mathematics, physics, engeneering, fractal phenomena, biology, social
sciences, finance, economy, chemistry, anomalous diffusion, rheology (\cite%
{BAGL}-\cite{BARP}, \cite{CRAI}, \cite{DJOR}, \cite{JUMA}-\cite{PAKH}, \cite%
{ROS1}, \cite{SENG}, \cite{SHIM}, \cite{WANG}). It is then of capital
importance to develop for fractional calculus, the mathematical tools
analogous to those of classical calculus (\cite{BAL1}, \cite{BAL2}, \cite%
{OLDH}, \cite{ROS1}, \cite{ROS2}). The fractional differential equations (%
\cite{AGAR}, \cite{DIE1}, \cite{DIE2}, \cite{DING}, \cite{POD}, \cite{ROS2})
are a particularly important case of such fundamental tools. An important
type of fractional differential equations is that of fractional functional
differential equations (FFDE) (\cite{BENCH}, \cite{CHANG}, \cite{LAKSH}, 
\cite{SOUSA}) which are the fractional analogue to functional differential
equations (\cite{AZBE}, \cite{DRIV}, \cite{HALE}, \cite{MYSH}). FFDE enable
the study of some physical, biological, social, economical processes
(automatic control, financial dynamics, economical planning, population
dynamics, blood cell dynamics, infectious disease dynamics. etc) with
fractal memory and non-locality effects and where the rate of change of the
state of the systems depends not only on the present time but on other
different times which are functions of the present time (\cite{BHALE}, \cite%
{LIU}, \cite{PIMEN}). The question then arises of the choice of a suitable
framework for the study of the solvability of these equations. But since the
functional Gevrey spaces play an important role in various branches of
partial and ordinary differential equations (\cite{ARAU}, \cite{CAML}, \cite%
{HOLM}, \cite{TEMA}), we think that these functional spaces can play the
role of such convenient framework. However let us pointwise that in order to
make these spaces adequate to our specific setting, it is necessary to make
a modification to their definition. This leads us to the definition of a new
Gevrey classes namely the Gevrey classes $G_{l,q_{1}}(\left[ q_{1},q_{2}%
\right] )$ of bound $q_{1}$ and index $l>0$ on an interval $\left[
q_{1},q_{2}\right] .$ Our purpose in this paper is to prove, under some
regularity conditions on the datas, the solvability in a Gevrey class of the
form $G_{k,-1}(\left[ -1,1\right] )$ of a class of nonlinear FFDE. Our
approach is mainly based on a theorem that we have proved in (\cite{BEND}).
The notion of fractional calculus we are interested in is the Caputo
fractional calculus. Some examples are given to illustrate our main results.

\section{\textbf{Preliminary notes and statement of the main result}}

\subsection{Basic notations}

Let $F:E\longrightarrow E$ \ be a mapping from a nonempty set $E$ \ into
itself. $F^{\text{ }\left \langle n\right \rangle }(n\in \mathbb{N})$
denotes iterate of $F$ of order $n$ for the composition of mappings.

For $z\in \mathbb{C}$ and $h>0,$ $B(z,h)$ is the open ball in $\mathbb{%
C\simeq 
%TCIMACRO{\U{211d} }%
%BeginExpansion
\mathbb{R}
%EndExpansion
}^{2}$ with the center $z$ and radius $h.$

Let $S_{1}$ and$\ S_{2}$ be two nonempty subsets of $\mathbb{C}$ such that $%
S_{1}\subset S_{2}$ and $f:S_{2}\rightarrow 
%TCIMACRO{\U{2102} }%
%BeginExpansion
\mathbb{C}
%EndExpansion
$ a mapping. We denote by $f_{|S_{1}}$ the restriction of the mapping $f$ to
the set $S_{1}.$

For $z\in \mathbb{C}$ and $S\subset \mathbb{C}$ $(S$ nonempty) we set : 
\begin{equation*}
\rho (z,S):=\underset{\zeta \in S}{\inf }(|z-\zeta |)
\end{equation*}

For $l,$ $\varphi ,$ $r>0$ and $n\in \mathbb{N}^{\ast }$ we set for every
nontrivial compact interval $\left[ q_{1},q_{2}\right] $ of $%
%TCIMACRO{\U{211d} }%
%BeginExpansion
\mathbb{R}
%EndExpansion
:$ 
\begin{equation*}
\left \{ 
\begin{array}{c}
\left[ q_{1},q_{2}\right] _{r}:=\{x+\zeta :x\in \left[ q_{1},q_{2}\right] ,%
\text{ }\zeta \in B(0,r)\} \\ 
\left[ q_{1},q_{2}\right] _{_{l,r,n}}:=\left[ q_{1},q_{2}\right] _{rn^{\frac{%
-1}{l}}} \\ 
\left[ q_{1},q_{2}\right] ^{\varphi ,r}:=\{q_{1}+se^{i\theta }:s\in
]0,q_{2}-q_{1}+r[,\text{ }\theta \in ]-\varphi ,\varphi \lbrack \} \\ 
\left[ q_{1},q_{2}\right] ^{r}:=\left[ q_{1},q_{2}\right] ^{r,r} \\ 
\left[ q_{1},q_{2}\right] ^{l,r,n}:=\left[ q_{1},q_{2}\right] ^{rn^{\frac{-1%
}{l}}}%
\end{array}%
\right.
\end{equation*}

Thus we have :%
\begin{equation*}
\left \{ 
\begin{array}{c}
\left[ q_{1},q_{2}\right] _{r}=\{z\in \mathbb{C}:\text{ }\varrho (z,\left[
q_{1},q_{2}\right] )<r\} \\ 
\left[ q_{1},q_{2}\right] _{l,r,n}=\{z\in \mathbb{C}:\text{ }\varrho (z,%
\left[ q_{1},q_{2}\right] )<rn^{\frac{-1}{l}}\}%
\end{array}%
\right.
\end{equation*}

\begin{remark}
The following inclusions hold for every $d\in ]q_{1},q_{2}],$ $r\in
]0,d-q_{1}[$ and $n\in \mathbb{N}^{\ast }:$%
\begin{equation}
\left \{ 
\begin{array}{c}
\lbrack d,s_{2}]_{r}\subset \left[ q_{1},q_{2}\right] ^{r} \\ 
\lbrack d,q_{2}]_{_{l,r,n}}\subset \left[ q_{1},q_{2}\right] ^{l,r,n}%
\end{array}%
\right.  \label{inclusion}
\end{equation}
\end{remark}

Let $f:S\longrightarrow 
%TCIMACRO{\U{2102} }%
%BeginExpansion
\mathbb{C}
%EndExpansion
$ \ be a bounded function. $||f$ $||_{_{\infty ,S}}$ denotes the quantity :%
\begin{equation*}
||f\text{ }||_{_{\infty ,S}}:=\sup_{z\in S}|f(s)|
\end{equation*}

By $C^{0}([-1,1])$ $($resp. $C^{1}([-1,1]))$ we mean the set of all\ complex
valued defined and continuous $($resp. of class $C^{1})$ on the interval $%
[-1,1].$ $C^{0}([-1,1])$ endowed with the uniform norm :%
\begin{equation*}
\left \Vert \cdot \right \Vert _{\infty ,[-1,1]}:f\in C^{0}([-1,1])\mapsto
\left \Vert f\right \Vert _{\infty ,[-1,1]}
\end{equation*}%
becomes a Banach space. For every $r\geq 0,$ $\overline{\Delta }_{\infty
}(r) $ denotes then the closed ball, in this Banach space$,$ of center the
null function and radius $r.$

Let $\xi _{1},$ $\xi _{2}\in 
%TCIMACRO{\U{2102} }%
%BeginExpansion
\mathbb{C}
%EndExpansion
.$ We denote by $\underrightarrow{\xi _{1},\xi _{2}}$ the linear path
joining $\xi _{1}$ to $\xi _{2}:$%
\begin{equation*}
\begin{array}{cccc}
\underrightarrow{\xi _{1},\xi _{2}}: & \left[ 0,1\right] & \rightarrow & 
%TCIMACRO{\U{2102} }%
%BeginExpansion
\mathbb{C}
%EndExpansion
\\ 
& t & \mapsto & \left( 1-t\right) \xi _{1}+t\xi _{2}%
\end{array}%
\end{equation*}

Along this paper $k>0$ and $\alpha \in \left] 0,1\right[ $ are a fixed
numbers.

\subsection{Fractional derivatives and integrals}

\begin{definition}
Let $\delta \in \left] 0,1\right[ $ and $f$ \ a Lebesgue-integrable function
on the nontrivial compact interval $[q_{1},q_{2}].$ The Caputo fractional
integral of order $\delta $ and lower bound $q_{1}$ of the function $f$ (%
\cite{DIE1},\cite{DIE2},\cite{POD},\cite{ROS1},\cite{ROS2}) is the function
denoted by $^{c}I$ $_{a}^{\delta }f$ \ and defined by $:$%
\begin{equation*}
^{c}I\text{ }_{q_{1}}^{\delta }f\text{ }(t):=\frac{1}{\Gamma (\delta )}%
\int_{q_{1}}^{t}(t-s)^{\delta -1}f\text{ }(s)ds,\quad t\in \lbrack
q_{1},q_{2}]
\end{equation*}%
where $\Gamma $ denotes the classical Gamma function.
\end{definition}

\begin{remark}
If the function $f$ \ is continuous on the interval $[q_{1},q_{2}],$ then
the function $^{c}I$ $_{q_{1}}^{\delta }f$ \ is well defined and continuous
on the entire interval $[q_{1},q_{2}],$ and we have $:$%
\begin{equation*}
^{c}I\text{ }_{q_{1}}^{\delta }f\text{ }(q_{1})=0
\end{equation*}
\end{remark}

\begin{definition}
Let $f$ $:[q_{1},q_{2}]\rightarrow 
%TCIMACRO{\U{2102} }%
%BeginExpansion
\mathbb{C}
%EndExpansion
$ be an absolutely continuous function on $[q_{1},q_{2}],$ then the Caputo
fractional derivative of $f$ of order $\delta $ and lower bound $q_{1}$ (%
\cite{DIE1},\cite{DIE2},\cite{POD},\cite{ROS1},\cite{ROS2}) is the function
denoted by $^{c}D$ $_{q_{1}}^{\delta }f$ and defined by $:$ 
\begin{equation*}
^{c}D\text{ }_{q_{1}}^{\delta }f\left( t\right) :=\frac{1}{\Gamma (1-\delta )%
}\int_{q_{1}}^{t}(t-s)^{-\delta }f^{\prime }(s)ds,\quad t\in \lbrack
q_{1},q_{2}]\ 
\end{equation*}

\begin{remark}
Let $f\in C^{1}([q_{1},q_{2}]).$ We have for every $x\in \lbrack
q_{1},q_{2}] $ $:$%
\begin{equation*}
\left( ^{c}I\text{ }_{q_{1}}^{\delta }\circ ^{c}D\text{ }_{q_{1}}^{\delta
}\right) f\left( x\right) =f\left( x\right) -f\left( q_{1}\right)
\end{equation*}%
If $f\left( q_{1}\right) =0,$ then the Caputo fractional integral of the
function $f$ of order $\delta ,$ $^{c}I$ $_{q_{1}}^{\delta }f$ \ is also of
class $C^{1}$ on the interval $[q_{1},q_{2}]$ and we have (\cite{DIE1},\cite%
{DIE2},\cite{POD},\cite{ROS1},\cite{ROS2})$:$ 
\begin{equation*}
\left( ^{c}D\text{ }_{q_{1}}^{\delta }\circ ^{c}I\text{ }_{q_{1}}^{\delta
}\right) f=f\text{ }
\end{equation*}
\end{remark}
\end{definition}

\subsection{Gevrey classes}

\begin{definition}
Let $l>0$. The Gevrey class of index $l$ on $\left[ q_{1},q_{2}\right] $,
denoted by $G_{l}(\left[ q_{1},q_{2}\right] ),$ is the set of all functions $%
f$ of class $C^{\infty }$ on $\left[ q_{1},q_{2}\right] $ such that $:$ 
\begin{equation*}
\left \Vert f^{\left( n\right) }\right \Vert _{\infty ,\left[ q_{1},q_{2}%
\right] }\leq B^{n+1}n^{n\left( 1+\frac{1}{l}\right) },\text{ }n\in 
%TCIMACRO{\U{2115}}%
%BeginExpansion
\mathbb{N}%
%EndExpansion
\end{equation*}%
where $B>0$ is a constant $($with the convention that $0^{0}=1).$
\end{definition}

\begin{definition}
The Gevrey class of bound $q_{1}$ and index $l$ on the interval $\left[
q_{1},q_{2}\right] $, denoted by $G_{l,q_{1}}(\left[ q_{1},q_{2}\right] ),$
is the set of all functions $f$ of class $C^{1}$ on $\left[ q_{1},q_{2}%
\right] $ and of class $C^{\infty }$ on $]q_{1},q_{2}]$ such that the
restriction $f_{|\left[ q,q_{2}\right] }$ of $f$ belongs to the Gevrey class 
$G_{l}(\left[ q,q_{2}\right] )$, for every $q$ $\in \left] q_{1},q_{2}\right[
.$
\end{definition}

\subsection{The property $\mathcal{S}\left( l\right) $}

\begin{definition}
\textit{A function }$\varphi $\textit{\ defined on the set }$\{q_{1}\} \cup %
\left[ q_{1},q_{2}\right] ^{r}\;(r\in ]0,\pi \lbrack )$\textit{\ is said to
satisfy the \ property }$\mathcal{S}(l)$\textit{\ on the interval }$\left[
q_{1},q_{2}\right] $\textit{\ if }$\varphi _{|\left[ q_{1},q_{2}\right]
^{r}} $\textit{\ is holomorphic on }$\left[ q_{1},q_{2}\right] ^{r},\varphi
_{|\left[ q_{1},q_{2}\right] }$\textit{\ is a function of class }$C^{1}$%
\textit{\ on }$\left[ q_{1},q_{2}\right] $\textit{\ and there exists a
constant }$\tau _{\varphi }\in ]0,\pi \lbrack $\textit{\ such that for all }$%
D\in ]0,\tau _{\varphi }]$\textit{\ there exist }$N_{l,\varphi }(D)\in 
%TCIMACRO{\U{2115} }%
%BeginExpansion
\mathbb{N}
%EndExpansion
^{\ast }$\textit{\ depending only on }$D,l$ and $\varphi $\textit{\ such
that the following inclusion }$:$%
\begin{equation}
\varphi (\left[ q_{1},q_{2}\right] ^{l,D,n+1})\subset \left[ q_{1},q_{2}%
\right] ^{l,D,n}  \label{EK}
\end{equation}%
\textit{holds for every integer }$n\geq N_{l,\varphi }(D).$\textit{\ The
number }$\tau _{\varphi }$\textit{\ is then called a }$\mathcal{S}(l)$-%
\textit{threshold for the function }$\varphi .$

\begin{remark}
Let $\varphi $\ be a function verifying the property $\mathcal{S}(l)$. Then $%
:$ 
\begin{equation*}
\varphi (\left[ q_{1},q_{2}\right] )\subset \left[ q_{1},q_{2}\right]
\end{equation*}%
On the other hand, it follows from $(\ref{EK})$ that we have for every $D\in
]0,\tau _{\varphi }[$ $:$%
\begin{equation*}
\varphi (\left[ q_{1},q_{2}\right] ^{l,D,N_{l,\varphi }(D)\left( p+1\right)
})\subset \left[ q_{1},q_{2}\right] ^{l,A,N_{l,\varphi }(D)p},\text{ }p\in 
%TCIMACRO{\U{2115} }%
%BeginExpansion
\mathbb{N}
%EndExpansion
^{\ast }
\end{equation*}%
Thence we have $:$%
\begin{equation*}
\varphi (\left[ q_{1},q_{2}\right] ^{l,DN_{l,\varphi }(D)^{-\frac{1}{l}%
},p+1})\subset \left[ q_{1},q_{2}\right] ^{l,DN_{l,\varphi }(D)^{-\frac{1}{l}%
},p},\text{ }p\in 
%TCIMACRO{\U{2115} }%
%BeginExpansion
\mathbb{N}
%EndExpansion
^{\ast }
\end{equation*}%
It follows that for every $D\in ]0,\tau _{\varphi }[$\ there exists $E\in
]0,D[$\ such that $:$%
\begin{equation*}
\varphi (\left[ q_{1},q_{2}\right] ^{l,E,p+1})\subset \left[ q_{1},q_{2}%
\right] ^{l,E,p},\text{ }p\in 
%TCIMACRO{\U{2115} }%
%BeginExpansion
\mathbb{N}
%EndExpansion
^{\ast }
\end{equation*}
\end{remark}
\end{definition}

\subsection{Statement of the main result$\ $}

Our main result in this paper is the following.

\begin{theorem}
\end{theorem}

\textit{Let} \textit{be given }$\lambda \in 
%TCIMACRO{\U{2102} }%
%BeginExpansion
\mathbb{C}
%EndExpansion
$ \textit{and} $\sigma >0$\textit{. Let }$a,$ $b$ \textit{and }$\psi $%
\textit{\ be a holomorphic functions on }$[-1,1]_{\sigma }$\textit{\ and }$%
\Phi $\textit{\ an entire function. We assume\ that the function }$a$ 
\textit{is not identically vanishing, that there exist a constants }$\alpha
_{0},$\textit{\ }$\beta _{0}>0$\textit{\ such that }$:$\textit{\  \ }%
\begin{equation*}
|\Phi (z)|\leq \alpha _{0}e\text{ }^{\beta _{0}|z|},\text{ }z\in 
%TCIMACRO{\U{2102}}%
%BeginExpansion
\mathbb{C}%
%EndExpansion
\end{equation*}%
\textit{and that }$\psi $ \textit{satisfies the property }$\mathcal{S}(k)$%
\textit{. We assume also that the following conditions are fullfiled }$:$%
\textit{\ }%
\begin{equation}
a(-1)=\mathit{\ }b(-1)=0  \label{a}
\end{equation}

\begin{equation}
\frac{2^{\alpha }}{\alpha \Gamma (\alpha )}||b||_{_{\infty
,[-1,1]}}+|\lambda |\mathit{\ }<\frac{\ln \left( \frac{\alpha \Gamma (\alpha
)}{e\alpha _{0}\beta _{0}2^{\alpha }\Vert a\Vert _{_{\infty ,[-1,1]}}}%
\right) }{\beta _{0}}  \label{b}
\end{equation}

\begin{equation}
||a||_{\infty ,[-1,1]}||\Phi ||_{\infty ,\left[ -\frac{\ln \text{ }\left( 
\frac{\alpha \Gamma (\alpha )}{\alpha _{0}\beta _{0}2^{\alpha }\Vert a\Vert
_{\infty ,[-1,1]}}\right) }{\beta _{0}},\frac{\ln \text{ }\left( \frac{%
\alpha \Gamma (\alpha )}{\alpha _{0}\beta _{0}2^{\alpha }\Vert a\Vert
_{\infty ,[-1,1]}}\right) }{\beta _{0}}\right] }+||b||_{\infty ,[-1,1]}<1
\label{c}
\end{equation}%
\begin{equation}
||a||_{\infty ,[-1,1]}||\Phi ^{\prime }||_{\infty ,\left[ -\frac{\ln \text{ }%
(\frac{\alpha \Gamma (\alpha )}{\alpha _{0}\beta _{0}2^{\alpha }\Vert a\Vert
_{\infty ,[-1,1]}})}{\beta _{0}},\frac{\ln \text{ }(\frac{\alpha \Gamma
(\alpha )}{\alpha _{0}\beta _{0}2^{\alpha }\Vert a\Vert _{\infty ,[-1,1]}})}{%
\beta _{0}}\right] }<\frac{\alpha \Gamma (\alpha )}{2^{\alpha }}  \label{d}
\end{equation}

\begin{equation}
||\psi ^{\prime }||_{\infty ,[-1,1]}\leq 1+\alpha  \label{e}
\end{equation}%
\textit{\ Then the FFDE }$:$%
\begin{equation*}
(E):\text{ }^{c}D_{-1}^{\alpha }y(t)=a(t)\Phi (y\text{ }(\psi (t\text{ }%
)))+b(t)\text{ \  \ }
\end{equation*}%
\textit{has a solution }$u$\textit{\ which belongs to the Gevrey class }$%
G_{k,-1}(\left[ -1,1\right] )$\textit{\ and verifies the initial condition }$%
:$%
\begin{equation*}
(E_{1}):y(-1\text{ })=\lambda \text{ \  \  \  \ }
\end{equation*}

\section{\textbf{Proof of the main result}}

The proof of the theorem is subdivided in three steps.

\begin{itemize}
\item \textbf{Step 1 : The localisation of the solutions of the equation : }%
\begin{equation*}
(\Im ):r=\frac{\alpha _{0}2^{\alpha }}{\alpha \Gamma (\alpha )}\Vert a\Vert
_{\infty ,[-1,1]}e^{\beta _{0}r}+\frac{2^{\alpha }}{\alpha \Gamma (\alpha )}%
\Vert b\Vert _{\infty ,[-1,1]}+|\lambda |
\end{equation*}
\end{itemize}

The study of the variations of the function $:$%
\begin{equation*}
H:t\mapsto \frac{\alpha _{0}2^{\alpha }}{\alpha \Gamma (\alpha )}\Vert
a\Vert _{\infty ,[-1,1]}e^{\beta _{0}t}+\frac{2^{\alpha }}{\alpha \Gamma
(\alpha )}\Vert b\Vert _{\infty ,[-1,1]}+|\lambda |-t
\end{equation*}%
\ shows, under the condition (\ref{b})$,$ that $H$\ is strictly decreasing
on $\left[ 0,\frac{\ln \text{ }\left( \frac{\alpha \Gamma (\alpha )}{\alpha
_{0}\beta _{0}2^{\alpha }\Vert a\Vert _{\infty ,[-1,1]}}\right) }{\beta _{0}}%
\right] $ and strictly increasing on $\left[ \frac{\ln \text{ }\left( \frac{%
\alpha \Gamma (\alpha )}{\alpha _{0}\beta _{0}2^{\alpha }\Vert a\Vert
_{\infty ,[-1,1]}}\right) }{\beta _{0}},+\infty \right[ .$ But : 
\begin{equation*}
H(0)=\frac{\alpha _{0}2^{\alpha }}{\alpha \Gamma (\alpha )}\Vert a\Vert
_{\infty ,[-1,1]}+\frac{2^{\alpha }}{\alpha \Gamma (\alpha )}\Vert b\Vert
_{\infty ,[-1,1]}+|\lambda |>0
\end{equation*}%
and 
\begin{eqnarray*}
&&H\left( \frac{\ln \text{ }\left( \frac{\alpha \Gamma (\alpha )}{\alpha
_{0}\beta _{0}2^{\alpha }\Vert a\Vert _{\infty ,[-1,1]}}\right) }{\beta _{0}}%
\right) \\
&=&\frac{2^{\alpha }}{\alpha \Gamma (\alpha )}||b||_{_{\infty
,[-1,1]}}+|\lambda |\mathit{\ }-\frac{\ln \left( \frac{\alpha \Gamma (\alpha
)}{e\alpha _{0}\beta _{0}2^{\alpha }\Vert a\Vert _{_{\infty ,[-1,1]}}}%
\right) }{\beta _{0}}<0
\end{eqnarray*}%
Therefore the equation $(\Im )$ has on $%
%TCIMACRO{\U{211d} }%
%BeginExpansion
\mathbb{R}
%EndExpansion
^{+}$ exactly two solutions $R_{0}<R_{1}$ and the following inequalities
hold $:$%
\begin{equation*}
0<R_{0}<\frac{\ln \text{ }\left( \frac{\alpha \Gamma (\alpha )}{\alpha
_{0}\beta _{0}2^{\alpha }\Vert a\Vert _{\infty ,[-1,1]}}\right) }{\beta _{0}}%
<R_{1}
\end{equation*}

\begin{itemize}
\item \textbf{Step 2 : Proof of the existence of a solution }$u$\textbf{\ of
the FFDE }$(E)$\textbf{\ in }$C^{1}([-1,1])$\textbf{\ such that the initial
condition }$(E_{1})$\textbf{\ holds.}
\end{itemize}

Consider the operator $T:C^{0}([-1,1])\longrightarrow C^{0}([-1,1])$ defined
by the formula $:$%
\begin{equation*}
T\left( f\right) \left( t\right) =^{c}I_{-1}^{\alpha }\left( a.(\Phi \circ
f\circ \psi )+b\right) \left( t\right) +\lambda ,\text{ }t\in \lbrack -1,1]
\end{equation*}%
We have for all $f\in \overline{\Delta }_{\infty }(R_{0}):$%
\begin{eqnarray*}
&&\Vert T(f\text{ })\Vert _{\infty ,[-1,1]} \\
&\leq &\Vert ^{c}I\text{ }_{-1}^{\alpha }\left( a.(\Phi \circ f\text{ }\circ
\psi )+b\right) \Vert _{\infty ,[-1,1]}+|\lambda \text{ }| \\
&\leq &\frac{2^{\alpha }}{\alpha \Gamma (\alpha )}\Vert a\Vert _{\infty
,[-1,1]}\Vert \Phi \circ f\text{ }\Vert _{\infty ,[-1,1]}+\frac{2^{\alpha }}{%
\alpha \Gamma (\alpha )}\Vert b\Vert _{\infty ,[-1,1]}+|\lambda \text{ }| \\
&\leq &\frac{2^{\alpha }}{\alpha \Gamma (\alpha )}\Vert a\Vert _{\infty
,[-1,1]}\alpha _{0}e^{\beta _{0}||f\text{ }\Vert _{\infty ,[-1,1]}}+\frac{%
2^{\alpha }}{\alpha \Gamma (\alpha )}\Vert b\Vert _{\infty ,[-1,1]}+|\lambda 
\text{ }| \\
&\leq &\frac{\alpha _{0}2^{\alpha }}{\alpha \Gamma (\alpha )}\Vert a\Vert
_{\infty ,[-1,1]}e^{\beta R_{0}}+\frac{2^{\alpha }}{\alpha \Gamma (\alpha )}%
\Vert b\Vert _{\infty ,[-1,1]}+|\lambda \text{ }|=R_{0}
\end{eqnarray*}%
Thence the closed ball $\overline{\Delta }_{\infty }(R_{0})$ is stable by
the operator $T.$ On the other hand, we have for all $f$ $,$ $g\in $ $%
\overline{\Delta }_{\infty }(R_{0}):$ 
\begin{eqnarray*}
&&||T(f)-T(g)||_{\infty ,[-1,1]} \\
&\leq &\frac{2^{\alpha }}{\alpha \Gamma (\alpha )}||a||_{\infty
,[-1,1]}||\Phi ^{\prime }||_{\infty ,[-R_{0},R_{0}]}||f-g||_{\infty ,[-1,1]}%
\text{ }
\end{eqnarray*}%
Since $0<R_{0}<$ $\frac{\ln \text{ }\left( \frac{\alpha \Gamma (\alpha )}{%
\alpha _{0}\beta _{0}2^{\alpha }\Vert a\Vert _{\infty ,[-1,1]}}\right) }{%
\beta _{0}}$ it follows from the condition (\ref{d}) that :%
\begin{equation*}
\frac{2^{\alpha }}{\alpha \Gamma (\alpha )}||a||_{\infty ,[-1,1]}||\Phi
^{\prime }||_{\infty ,[-R_{0},R_{0}]}<1
\end{equation*}%
Thence $T$ has, in $\overline{\Delta }_{\infty }(R_{0}),$ a unique fixed
point $u.$

Consider the sequence of functions $(f_{n})_{n\in 
%TCIMACRO{\U{2115} }%
%BeginExpansion
\mathbb{N}
%EndExpansion
}$ defined on $\left[ -1,1\right] $ by the formula : 
\begin{equation*}
f_{n}:=T^{<n>}(f_{0}),\text{ }n\in 
%TCIMACRO{\U{2115}}%
%BeginExpansion
\mathbb{N}%
%EndExpansion
\end{equation*}%
where $f_{0}$ is the null function. Direct computations show that the
functions $f_{n}$ belong to $\overline{\Delta }_{\infty }(R_{0}),$ are of
class $C^{1}$ on $[-1,1]$ and verify the inequality :%
\begin{equation*}
||f_{n+1}-f_{n}||_{_{\infty ,[-1,1]}}\leq ||f_{1}||_{_{\infty ,[-1,1]}}Q^{n},%
\text{ }n\in 
%TCIMACRO{\U{2115} }%
%BeginExpansion
\mathbb{N}
%EndExpansion
\quad
\end{equation*}%
where :%
\begin{equation*}
Q:=\frac{2^{\alpha }}{\alpha \Gamma (\alpha )}||a||_{\infty ,[-1,1]}||\Phi
^{\prime }||_{\infty ,[-R_{0},R_{0}]}
\end{equation*}%
Let us set for each $n\in 
%TCIMACRO{\U{2115} }%
%BeginExpansion
\mathbb{N}
%EndExpansion
,$ $F_{n}:=f_{n+1}-f_{n}.$ Thence since $Q\in \left[ 0,1\right[ ,$ it
follows that the function series $\sum F_{n}$ is uniformly convergent on $%
[-1,1]$ to a function $v\in \overline{\Delta }_{\infty }(R_{0})$ which is a
fixed point of the operator $T.$ It follows that $v=u.$ Consequently the
function series $\sum F_{n}$ is uniformly convergent of $[-1,1]$ to the
function $u_{\text{ }}$ $\in C^{0}([-1,1]).$ On the other hand we have for
all $x\in ]-1,1]$ and $n\in 
%TCIMACRO{\U{2115} }%
%BeginExpansion
\mathbb{N}
%EndExpansion
^{\ast }:$%
\begin{eqnarray*}
&&F_{n+1}^{\prime }(x) \\
&=&\frac{\alpha (x+1)^{\alpha -1}}{\Gamma (\alpha )}\underset{0}{\overset{1}{%
\int }}(1-t)^{\alpha -1}a(-1+t(x+1))\cdot \\
&&\cdot \left( 
\begin{array}{c}
\Phi (f_{n}(\psi (-1+t(x+1))))- \\ 
-\Phi (f_{n-1}(\psi (-1+t(x+1))))%
\end{array}%
\right) dt+ \\
&&+\frac{(x+1)^{\alpha }}{\Gamma (\alpha )}\underset{0}{\overset{1}{\int }}%
(1-t)^{\alpha -1}ta^{\prime }(-1+t(x+1))\cdot \\
&&\cdot \left( 
\begin{array}{c}
\Phi (f_{n}(\psi (-1+t(x+1))))- \\ 
-\Phi (f_{n-1}(\psi (-1+t(x+1))))%
\end{array}%
\right) dt+ \\
&&+\frac{(x+1)^{\alpha }}{\Gamma (\alpha )}\underset{0}{\overset{1}{\int }}%
(1-t)^{\alpha -1}ta(-1+t(x+1))\psi ^{\prime }(-1+t(x+1))\cdot \\
&&\cdot \left( 
\begin{array}{c}
\left( \Phi ^{\prime }(f_{n}(\psi (-1+t(x+1))))-\Phi ^{\prime }(f_{n-1}(\psi
(-1+t(x+1))))\right) \cdot \\ 
\cdot f_{n}^{\prime }(\psi (-1+t(x+1))) \\ 
+\Phi ^{\prime }(f_{n-1}(\psi (-1+t(x+1))))\cdot \\ 
\cdot \left( f_{n}^{\prime }(\psi (-1+t(x+1)))-f_{n-1}^{\prime }(\psi
(-1+t(x+1)))\right)%
\end{array}%
\right) dt
\end{eqnarray*}%
Since $a(-1)=0$ it follows that :%
\begin{eqnarray*}
&&||F_{n+1}^{\prime }||_{\infty ,[-1,1]} \\
&\leq &\frac{2^{\alpha }}{(\alpha +1)\Gamma (\alpha )}||a^{\prime
}||_{\infty ,[-1,1]}||\Phi ^{\prime }||_{\infty
,[-R_{0},R_{0}]}||F_{n}||_{\infty ,[-1,1]}+ \\
&&+\frac{2^{\alpha }}{\alpha (\alpha +1)\Gamma (\alpha )}||a^{\prime
}||_{\infty ,[-1,1]}||\Phi ^{\prime }||_{\infty
,[-R_{0},R_{0}]}||F_{n}||_{\infty ,[-1,1]}+ \\
&&+\frac{2^{\alpha }}{\alpha (\alpha +1)\Gamma (\alpha )}||\psi ^{\prime
}||_{\infty ,[-1,1]}||a^{\prime }||_{\infty ,[-1,1]}||\Phi ^{^{\prime \prime
}}||_{\infty ,[-R_{0},R_{0}]}||f_{n}^{\prime }||_{\infty
,[-1,1]}||F_{n}||_{\infty ,[-1,1]}+ \\
&&+\frac{2^{\alpha }}{\alpha (\alpha +1)\Gamma (\alpha )}||\psi ^{\prime
}||_{\infty ,[-1,1]}||a^{\prime }||_{\infty ,[|1,1]}||\Phi ^{\prime
}||_{\infty ,[-R_{0},R_{0}]}||F_{n}^{\prime }||_{\infty ,[-1,1]} \\
&\leq &\frac{2^{\alpha }}{\alpha \Gamma (\alpha )}||a^{\prime }||_{\infty
,[-1,1]}||\Phi ^{\prime }||_{\infty ,[-R_{0},R_{0}]}||f_{1}||_{_{\infty
,[-1,1]}}Q^{n}+ \\
&&+\frac{2^{\alpha }}{\alpha (\alpha +1)\Gamma (\alpha )}||\psi ^{\prime
}||_{\infty ,[-1,1]}||a||_{\infty ,[-1,1]}||\Phi ^{^{\prime \prime
}}||_{\infty ,[-R_{0},R_{0}]}||f_{n}^{\prime }||_{\infty
,[-1,1]}||f_{1}||_{_{\infty ,[-1,1]}}Q^{n}+ \\
&&+\frac{2^{\alpha }}{\alpha \Gamma (\alpha )}||a||_{\infty ,[-1,1]}||\Phi
^{\prime }||_{\infty ,[-R_{0},R_{0}]}||F_{n}^{\prime }||_{\infty ,[-1,1]}
\end{eqnarray*}

\bigskip

\newpage

\bigskip

To achieve the proof of this step we need the following result.

\begin{prop}
\textit{The sequence }$(||f_{n}^{\prime }||_{_{\infty ,[-1,1]}})_{n\in 
%TCIMACRO{\U{2115} }%
%BeginExpansion
\mathbb{N}
%EndExpansion
^{\ast }}$\textit{\ is bounded.}
\end{prop}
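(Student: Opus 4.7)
The plan is to turn the one-step estimate (the inequality for $\|F_{n+1}'\|_{\infty,[-1,1]}$ derived just above the statement) into a self-referential bound on $u_n:=\|f_n'\|_{\infty,[-1,1]}$, and close it by the multiplicative discrete Gronwall inequality with summable weights. Set $G_n:=\|F_n'\|_{\infty,[-1,1]}$, and rewrite that estimate as
$$G_{n+1}\le K_1 Q^n+K_2\, u_n\, Q^n+Q\, G_n,\qquad n\ge 1,$$
where $K_1,K_2$ are the two constants appearing in front of $\|f_1\|_\infty Q^n$ and $\|f_n'\|_\infty\|f_1\|_\infty Q^n$ respectively; both are finite thanks to the regularity of $a,b,\Phi,\psi$ and $\|f_n\|_\infty\le R_0$.

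First, iterate the one-step recurrence (straightforward induction on $n$) to obtain
$$G_{n+1}\le Q^n\Bigl(G_1+n K_1+K_2\sum_{k=1}^{n}u_k\Bigr).$$
Since $f_0\equiv 0$ we have $f_{n+1}=\sum_{k=0}^{n}F_k$, hence $u_{n+1}\le\sum_{k=0}^{n}G_k$. Plugging in the closed form, swapping the order of summation in the double sum containing $u_j$, and using the geometric tail estimate $\sum_{k=j+1}^{n}Q^{k-1}\le Q^j/(1-Q)$, the terms independent of the $u_j$'s collapse to a single constant $A$ depending only on $G_0,G_1,K_1,Q$, leaving
$$u_{n+1}\le A+\frac{K_2}{1-Q}\sum_{j=1}^{n-1}u_j\, Q^j,\qquad n\ge 1.$$

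Now apply the multiplicative discrete Gronwall inequality with the weights $c_j:=K_2 Q^j/(1-Q)$, which are summable with $\sum_{j\ge 1}c_j=K_2 Q/(1-Q)^2<\infty$ since $Q<1$. Setting $\tilde\mu_n:=A+\sum_{j=1}^{n}c_j u_j$, the previous inequality reads $u_{n+1}\le\tilde\mu_{n-1}$, so for $n\ge 2$ one has $\tilde\mu_n-\tilde\mu_{n-1}=c_n u_n\le c_n\tilde\mu_{n-2}\le c_n\tilde\mu_{n-1}$, and telescoping gives
$$u_n\le\tilde\mu_{n-2}\le\tilde\mu_1\exp\!\Bigl(\frac{K_2 Q}{(1-Q)^2}\Bigr),$$
a bound independent of $n$. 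Finiteness of $u_1$ follows because $a(-1)=b(-1)=0$ from (\ref{a}) ensures $f_1={}^c I_{-1}^{\alpha}b+\lambda$ is of class $C^1$ on $[-1,1]$.

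The obstacle I anticipate is a cosmetic temptation to close the estimate directly from the one-step inequality with a fixed-point-type bound $u_{n+1}\le A'+B v_n$, $v_n=\max_{k\le n}u_k$, and then require $B=K_2 Q/(1-Q)^2<1$; hypotheses (\ref{a})--(\ref{e}) give no such control over $B$. The point that saves the argument is that iterating the $G$-recurrence endows $u_j$ with the geometric weight $Q^j$; summability of those weights, not smallness of their sum, is what actually drives Gronwall. Condition (\ref{e}) on $\|\psi'\|$ and the other hypotheses are used only implicitly, through the finiteness of the constants $K_1,K_2,G_0,G_1$.
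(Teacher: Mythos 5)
Your argument is correct, but it takes a genuinely different route from the paper's. The paper does not go through the $F_{n}'$ estimates at all for this proposition: it differentiates the iteration $f_{n+1}=T(f_{n})$ directly, uses $(\ref{a})$ to absorb the $(x+1)^{\alpha-1}$ singularity, and lands on the one-step affine recurrence
\begin{equation*}
\Vert f_{n+1}'\Vert _{\infty ,[-1,1]}\leq C+\tfrac{2^{\alpha }}{\alpha (\alpha +1)\Gamma (\alpha )}\Vert a\Vert \,\Vert \psi '\Vert \,\Vert \Phi '\Vert _{\infty ,[-R_{0},R_{0}]}\,\Vert f_{n}'\Vert _{\infty ,[-1,1]},
\end{equation*}
then invokes $(\ref{e})$, i.e. $\Vert \psi '\Vert \leq 1+\alpha $, precisely to force the coefficient of $\Vert f_{n}'\Vert $ down to $Q<1$; boundedness is then immediate from the affine contraction $u_{n+1}\leq C+Qu_{n}$. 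You instead recycle the $\Vert F_{n+1}'\Vert $ inequality, iterate it, couple back to $u_{n}$ through $f_{n+1}=\sum_{k\leq n}F_{k}$, and close with a summable-weight discrete Gronwall lemma. Your version is markedly longer, and it has a mild logical oddity in that the displayed $\Vert F_{n+1}'\Vert $ estimate is exactly the ingredient the paper introduces the proposition in order to exploit; but it is self-consistent and buys something real: you never need the coefficient multiplying $u_{n}$ to be smaller than $1$, only the geometric decay $Q<1$ coming from $(\ref{d})$, so hypothesis $(\ref{e})$ is reduced to a finiteness condition rather than a quantitative one. (One small slip: $f_{1}={}^{c}I_{-1}^{\alpha }\left( a\Phi (0)+b\right) +\lambda $, not ${}^{c}I_{-1}^{\alpha }b+\lambda $, unless $\Phi (0)=0$; the $C^{1}$ regularity of $f_{1}$ still follows from $(\ref{a})$, so nothing breaks.)
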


\begin{proof}
We have for all $x\in ]-1,1]$ and $n\in 
%TCIMACRO{\U{2115} }%
%BeginExpansion
\mathbb{N}
%EndExpansion
^{\ast }:$ 
\begin{eqnarray*}
f_{n+1}^{\prime }(x) &=&\frac{\alpha (x+1)^{\alpha -1}}{\Gamma (\alpha )}%
\underset{0}{\overset{1}{\int }}(1-s)^{\alpha -1}\cdot \\
&&\cdot \left( 
\begin{array}{c}
a(-1+s(x+1))\cdot \\ 
\cdot \Phi (f_{n}(\psi (-1+s(x+1))))+ \\ 
+b(-1+s(x+1))%
\end{array}%
\right) ds+ \\
&&+\frac{(x+1)^{\alpha }}{\Gamma (\alpha )}\underset{0}{\overset{1}{\int }}%
(1-s)^{\alpha -1}s\cdot \\
&&\cdot \left( 
\begin{array}{c}
a^{\prime }(-1+s(x+1))\cdot \\ 
\cdot \Phi (f_{n}(\psi (-1+s(x+1))))+ \\ 
+b^{\prime }(-1+s(x+1)%
\end{array}%
\right) ds+ \\
&&+\frac{(x+1)^{\alpha }}{\Gamma (\alpha )}\underset{0}{\overset{1}{\int }}%
(1-s)^{\alpha -1}sa(-1+s(x+1))\cdot \\
&&\cdot \psi ^{\prime }(-1+s(x+1))\Phi ^{\prime }(f_{n}(\psi
(-1+s(x+1))))\cdot \\
&&\cdot f_{n}^{\prime }(\psi (-1+s(x+1)))ds
\end{eqnarray*}%
It follows from the assumption (\ref{a}) that :%
\begin{eqnarray*}
||f_{n+1}^{\prime }||_{\infty ,[-1,1]} &\leq &\frac{2^{\alpha }}{\alpha
\Gamma (\alpha )}(||a^{\prime }||_{\infty ,[-1,1]}||\Phi ^{\prime
}||_{\infty ,[-R_{0},R_{0}]}+ \\
&&+||b^{\prime }||_{\infty ,[-1,1]})+\frac{2^{\alpha }}{\alpha (\alpha
+1)\Gamma (\alpha )}||a||_{\infty ,[-1,1]}\cdot \\
&&\cdot ||\psi ^{\prime }||_{\infty ,[-1,1]}||\Phi ^{\prime }||_{\infty
,[-R_{0},R_{0}]}||f_{n}^{\prime }||_{\infty ,[-1,1]}
\end{eqnarray*}%
But we have according to the assumption (\ref{e}) that :%
\begin{eqnarray*}
&&\frac{2^{\alpha }}{\alpha (\alpha +1)\Gamma (\alpha )}||a||_{\infty
,[-1,1]}||\psi ^{\prime }||_{\infty ,[-1,1]}||\Phi ^{\prime }||_{\infty
,[-R_{0},R_{0}]} \\
&\leq &Q<1
\end{eqnarray*}%
Consequently the following inequality holds for each $n\in 
%TCIMACRO{\U{2115} }%
%BeginExpansion
\mathbb{N}
%EndExpansion
^{\ast }$ :%
\begin{eqnarray*}
||f_{n+1}^{\prime }||_{\infty ,[-1,1]} &\leq &\frac{2^{\alpha }}{\alpha
\Gamma (\alpha )}(||a^{\prime }||_{\infty ,[-1,1]}||\Phi ^{\prime
}||_{\infty ,[-R_{0},R_{0}]}+ \\
&&+||b^{\prime }||_{\infty ,[-1,1]})+Q||f_{n}^{\prime }||_{\infty ,[-1,1]}
\end{eqnarray*}%
Since $Q\in \lbrack 0,1[,$ it follows that the sequence $(||f_{n}^{\prime
}||_{_{\infty ,[-1,1]}})_{n\in 
%TCIMACRO{\U{2115} }%
%BeginExpansion
\mathbb{N}
%EndExpansion
^{\ast }}$\ is bounded.

The proof of the proposition is complete.
\end{proof}

Now we set :%
\begin{equation*}
\theta :=||f_{1}||_{_{\infty ,[-1,1]}}\left( 
\begin{array}{c}
\frac{2^{\alpha }}{\alpha \Gamma (\alpha )}||a||_{\infty ,[-1,1]}||\Phi
^{\prime }||_{\infty ,[-R_{0},R_{0}]}+ \\ 
+\frac{2^{\alpha }}{\alpha (\alpha +1)\Gamma (\alpha )}||\psi ^{\prime
}||_{\infty ,[-1,1]}||a||_{\infty ,[|1,1]}||\Phi ^{^{\prime \prime
}}||_{\infty ,[-R_{0},R_{0}]}\cdot \\ 
\cdot \underset{n\in 
%TCIMACRO{\U{2115} }%
%BeginExpansion
\mathbb{N}
%EndExpansion
^{\ast }}{\sup }||f_{n}^{\prime }||_{_{\infty ,[-1,1]}}%
\end{array}%
\right)
\end{equation*}%
Then we can write :%
\begin{equation*}
||F_{n+1}^{\prime }||_{\infty ,[-1,1]}\leq \theta Q^{n}+Q||F_{n}^{\prime
}||_{\infty ,[-1,1]},\text{ }n\in 
%TCIMACRO{\U{2115} }%
%BeginExpansion
\mathbb{N}
%EndExpansion
^{\ast }
\end{equation*}%
Direct computations show then that :%
\begin{equation*}
||F_{n+1}^{\prime }||_{\infty ,[-1,1]}\leq \theta nQ^{n}+||F_{1}^{\prime
}||_{\infty ,[-1,1]}Q^{n},\text{ }n\in 
%TCIMACRO{\U{2115} }%
%BeginExpansion
\mathbb{N}
%EndExpansion
^{\ast }
\end{equation*}%
Since $Q\in \lbrack 0,1[,$\ it follows that the function series $\sum
F_{n}^{\prime }$ is uniformly convergent on $[-1,1].$ Thence the functions $%
u $ is of class $C^{1}$ on $[-1,1]$ and satisfies the relation :%
\begin{equation*}
^{c}I_{-1}^{\alpha }\left( a.(\Phi \circ u\circ \psi )+b\right) +\lambda =u
\end{equation*}%
Consequently, according to the assumption (\ref{a}), we can write for all $%
t\in \left[ -1,1\right] :$%
\begin{eqnarray*}
^{c}D\text{ }_{-1}^{\alpha }u\left( t\right) &=&^{c}D_{-1}^{\alpha
}[^{c}I_{-1}^{\alpha }\left( a.(\Phi \circ u\circ \psi )+b\right) ]\left(
t\right) \\
&=&a\left( t\right) (\Phi \left( u\left( \psi \left( t\right) \right)
\right) )+b\left( t\right)
\end{eqnarray*}%
So $u$ is a solution of the FFDE $\left( E\right) $ which belongs to $%
C^{1}([-1,1])$ and fullfiles the relation $u(-1)=\lambda .$

\begin{itemize}
\item \textbf{Step 3 : Proof that }$u$\textbf{\ belongs to the Gevrey class }%
$G_{k,-1}(\left[ -1,1\right] ).$
\end{itemize}

Since the function $\Lambda $ defined on $[0,\min (1,\sigma )[$ by :%
\begin{equation*}
\begin{array}{cccc}
\Lambda : & [0,\min (1,\sigma )[ & \rightarrow & 
%TCIMACRO{\U{211d} }%
%BeginExpansion
\mathbb{R}
%EndExpansion
\\ 
& 0 & \mapsto & \frac{2^{\alpha }}{\alpha \Gamma (\alpha )}\max \left( 
\begin{array}{c}
\Vert a\Vert _{\infty ,[-1,1]}||\Phi ^{\prime }||_{\infty ,[-R_{0},R_{0}]},
\\ 
||a||_{\infty ,[-1,1]}||\Phi ||_{\infty ,[-R_{0},R_{0}]}+||b||_{\infty
,[-1,1]}%
\end{array}%
\right) \\ 
& s>0 & \mapsto & \frac{\left( 2+s\right) ^{\alpha }}{\alpha \Gamma (\alpha )%
}\max \left( 
\begin{array}{c}
\Vert a\Vert _{\infty ,[-1,1]^{s}}||\Phi ^{\prime }||_{\infty
,[-R_{0},R_{0}]_{s}}, \\ 
||a||_{\infty ,[-1,1]^{s}}||\Phi ||_{\infty
,[-R_{0},R_{0}]_{s}}+||b||_{\infty ,[-1,1]^{s}}%
\end{array}%
\right)%
\end{array}%
\end{equation*}%
\ is continuous on $[0,\min (1,\sigma )[$ and verifies by virtue of the
assumptions$\ $(\ref{c}) and (\ref{d}) the inequality $\Lambda (0)<1.$ It
follows that there exists $s_{1}\in ]0,\min (1,\sigma ,\tau _{\psi })[$ such
that :%
\begin{equation}
\text{ \ }\Lambda ([0,s_{1}])\subset \lbrack 0,1[  \label{contract1}
\end{equation}%
where $\tau _{\psi }$ is a $\mathcal{S}\left( k\right) $-threshold of $\psi
. $ Let $d$ be an arbitrary but fixed element of $\left] -1,1\right[ .$
Thanks to the remark $2.7.,$ there exists $s_{2}\in ]0,\min \left(
s_{1},1,1+d\right) [$ such that the functions $a$ and $b$ are both
holomorphic on $[-1,1]^{s_{2}}$ and the following condition holds :%
\begin{equation}
\text{ \ }\psi ([-1,1]^{k,s_{2},n+1})\subset \lbrack -1,1]^{k,s_{2,}n}\text{ 
},\text{ }n\in 
%TCIMACRO{\U{2115} }%
%BeginExpansion
\mathbb{N}
%EndExpansion
^{\ast }  \label{psy}
\end{equation}%
Consider the sequence of functions $(\omega
_{n}:[-1,1]^{k,s_{_{2},}n}\rightarrow 
%TCIMACRO{\U{2102} }%
%BeginExpansion
\mathbb{C}
%EndExpansion
)_{_{n\in 
%TCIMACRO{\U{2115} }%
%BeginExpansion
\mathbb{N}
%EndExpansion
^{\ast }}}$ where : 
\begin{equation*}
\omega _{1}(z):=0,\text{ }z\in \lbrack -1,1]^{k,s_{2},1}
\end{equation*}%
and : 
\begin{eqnarray*}
&&\omega _{n+1}(z) \\
&:&=\frac{(z+1)^{\alpha }}{\Gamma (\alpha )}\int_{0}^{1}(1-s)^{\alpha
-1}\left( 
\begin{array}{c}
a(-1+s(z+1))\Phi (\omega _{n}(\psi (-1+s(z+1))))+ \\ 
+b(-1+s(z+1))%
\end{array}%
\right) ds+\lambda
\end{eqnarray*}%
for each $n\in 
%TCIMACRO{\U{2115} }%
%BeginExpansion
\mathbb{N}
%EndExpansion
^{\ast }$and $z\in \lbrack -1,1]^{k,s_{2},n}.$ Then direct computations,
based on (\ref{psy}), show that the function $\omega _{n}$ is for every $%
n\in 
%TCIMACRO{\U{2115} }%
%BeginExpansion
\mathbb{N}
%EndExpansion
^{\ast }$ holomorphic on $[-1,1]^{k,s_{2},n}.$

\bigskip

\bigskip \newpage

\bigskip

\bigskip

\begin{prop}
\textit{The inclusion } $\omega _{n}([-1,1]^{k,s_{2},n})\subset \lbrack
-R_{0},R_{0}]_{k,s_{2},n}$ \textit{holds for every}$\ n\in 
%TCIMACRO{\U{2115} }%
%BeginExpansion
\mathbb{N}
%EndExpansion
^{\ast }.\  \  \  \ $ \ 
\end{prop}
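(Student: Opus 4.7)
The plan is to proceed by induction on $n \in \mathbb{N}^{\ast}$. For the base case $n = 1$, the function $\omega_1 \equiv 0$ lies in $[-R_0, R_0]$ and \emph{a fortiori} in $[-R_0, R_0]_{k, s_2, 1}$.

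For the inductive step, I would assume the inclusion at index $n$ and fix $z \in [-1, 1]^{k, s_2, n+1}$. My first task would be to check that the integral defining $\omega_{n+1}(z)$ is well-formed. Since $[-1, 1]^{k, s_2, n+1}$ is a sector with apex $-1$, the straight path $s \mapsto -1 + s(z+1)$ from $-1$ to $z$ stays inside it, and \emph{a fortiori} inside $[-1, 1]^{s_2}$, so $a$ and $b$ are holomorphic along it. By the condition (\ref{psy}), $\psi$ then sends this path into $[-1, 1]^{k, s_2, n}$, and the inductive hypothesis places $\omega_n \circ \psi$ in $[-R_0, R_0]_{k, s_2, n} \subset [-R_0, R_0]_{s_2}$. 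Since $\Phi$ is entire, the entire integrand is well-defined and bounded uniformly by $\|a\|_{\infty, [-1, 1]^{s_2}}\|\Phi\|_{\infty, [-R_0, R_0]_{s_2}} + \|b\|_{\infty, [-1, 1]^{s_2}}$.

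With the integrand in hand, I would use the estimate $|z+1| \leq 2 + s_2(n+1)^{-1/k}$ together with $\int_0^1 (1-s)^{\alpha-1}\,ds = 1/\alpha$ to derive an upper bound of the form
\begin{equation*}
|\omega_{n+1}(z) - \lambda| \leq \frac{(2 + s_2(n+1)^{-1/k})^\alpha}{\alpha \Gamma(\alpha)}\bigl(\|a\|_{\infty, [-1, 1]^{s_2}}\|\Phi\|_{\infty, [-R_0, R_0]_{s_2}} + \|b\|_{\infty, [-1, 1]^{s_2}}\bigr),
\end{equation*}
whose right-hand side is exactly the second entry of the max defining $\Lambda(s_2(n+1)^{-1/k})$ from Step~3, and hence strictly less than $1$ by (\ref{contract1}). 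I would then compare this with the fixed-point equation $R_0 = H(R_0) + R_0$ from Step~1 to decompose $\omega_{n+1}(z)$ as a point of $[-R_0, R_0]$ perturbed by a quantity of order $(n+1)^{-1/k}$.

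The hard part will be this final comparison: the estimate above controls $|\omega_{n+1}(z) - \lambda|$ as a modulus bound, whereas the target $[-R_0, R_0]_{k, s_2, n+1}$ is a narrow stadium around the real interval. To bridge the gap I expect to need a first-order expansion of $(2 + s_2(n+1)^{-1/k})^\alpha$ about $2^\alpha$, combined with the continuity of the thickened norms $\|a\|_{\infty, [-1,1]^{s}}$, $\|\Phi\|_{\infty, [-R_0, R_0]_{s}}$, $\|b\|_{\infty, [-1,1]^{s}}$ in $s$, so that the excess between $\Lambda(s_2(n+1)^{-1/k})$ and $\Lambda(0)$ is itself $O((n+1)^{-1/k})$. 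The strict inequality $\Lambda(s_2) < 1$ coming from (\ref{contract1}) is the indispensable quantitative ingredient that keeps this excess within the admissible tolerance $s_2(n+1)^{-1/k}$ and thereby completes the induction.
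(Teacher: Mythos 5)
There is a genuine gap in the inductive step, and it sits exactly where you flagged ``the hard part.'' Your estimate controls $|\omega_{n+1}(z)-\lambda|$ by a quantity of order $\Lambda(s_2)<1$, i.e.\ by a constant independent of $n$; but the target $[-R_0,R_0]_{k,s_2,n+1}$ requires $\varrho(\omega_{n+1}(z),[-R_0,R_0])<s_2(n+1)^{-1/k}$, a bound that shrinks to $0$ as $n\to\infty$. No refinement of the kind you propose can close this: the excess you would control by expanding $(2+s_2(n+1)^{-1/k})^{\alpha}$ about $2^{\alpha}$ and by the continuity of the thickened norms is the difference $\Lambda(s_2(n+1)^{-1/k})-\Lambda(0)$, which is indeed $O((n+1)^{-1/k})$, but that is not the quantity that must be small. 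What must be small is the distance of $\omega_{n+1}(z)$ itself to the segment, and already for real $z\in[-1,1]$ one has $\omega_{n+1}(z)=f_n(z)$, so $|\omega_{n+1}(z)-\lambda|$ is genuinely of order $1$ and carries no information about closeness to $[-R_0,R_0]$ at the scale $(n+1)^{-1/k}$. The comparison with the fixed--point equation of Step 1 (which only says $H(R_0)=0$) does not produce the required decomposition.

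The idea your proposal is missing is the choice of comparison point. The paper does not compare $\omega_{n+1}(z)$ with $\lambda$; it compares it with $\omega_{n+1}(\widehat{z})$, where $\widehat{z}$ is the point of $[-1,1]$ closest to $z$, so that $\varrho(\omega_{n+1}(z),[-R_0,R_0])\leq|\omega_{n+1}(z)-\omega_{n+1}(\widehat{z})|$ (the value at $\widehat{z}$ lying in $[-R_0,R_0]$ by Step 2). Writing $\omega_{n+1}$ as a contour integral with base point $-1$ and deforming, the difference becomes an integral over the short segment $\underrightarrow{\widehat{z},z}$, whose length is $\varrho(z,[-1,1])<s_2(n+1)^{-1/k}$; the integrand is bounded by the inductive hypothesis together with (\ref{psy}), and the factor $\Lambda(s_2)<1$ from (\ref{contract1}) then converts the bound
\begin{equation*}
|\omega_{n+1}(z)-\omega_{n+1}(\widehat{z})|\leq\Lambda(s_2)\,\varrho(z,[-1,1])<s_2(n+1)^{-1/k}
\end{equation*}
into exactly the inclusion $\mathcal{P}(n+1)$. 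In other words, the decay in $n$ comes from the shortness of the integration path from $\widehat{z}$ to $z$, not from the dependence of $\Lambda$ on its argument. Your setup of the well--definedness of the integrand and the use of (\ref{psy}) and the inductive hypothesis are sound and match the paper; it is only this final localization device that is absent, and without it the induction does not close.
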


\begin{proof}
We denote the last inclusion by $\mathcal{P}(n).$ We denote for every $z\in 
%TCIMACRO{\U{2102} }%
%BeginExpansion
\mathbb{C}
%EndExpansion
$ by $\widehat{z}$ the closest point of $[-1,1]$ to $z.$ It is obvious that $%
\mathcal{P}(1)$ is true. Assume for a certain $n\in 
%TCIMACRO{\U{2115} }%
%BeginExpansion
\mathbb{N}
%EndExpansion
^{\ast }$ that $\mathcal{P}(p)$ is true for every $p\in \left \{
1,...,n\right \} $ $.$ Since the function $\omega _{n+1}$ is holomorphic on $%
[-1,1]^{k,s_{2},n+1},$ we have then for each $z\in \lbrack
-1,1]^{k,s_{2},n+1}$ :%
\begin{eqnarray*}
&&\varrho (\omega _{n+1}(z),[-R_{0},R_{0}]) \\
&\leq &|\omega _{n+1}(z)-\omega _{n+1}(\widehat{z})| \\
&\leq &\frac{1}{\Gamma (\alpha )}\underset{\underrightarrow{\widehat{z},z}}{%
\int }\ |z-\zeta |^{\alpha -1}|a(\zeta )\Phi (\omega _{n}(\psi (\zeta
))|.|d\zeta |+ \\
&&+\frac{1}{\Gamma (\alpha )}\underset{\underrightarrow{\widehat{z},z}}{\int 
}|z-\zeta |^{\alpha -1}|b(\zeta )|.|d\zeta | \\
&\leq &\frac{\left( 2+s_{2}\right) ^{\alpha }}{\alpha \Gamma (\alpha )}%
\left( 
\begin{array}{c}
||a||_{\infty ,[-1,1]^{k,s_{2},n}}||\Phi ||_{\infty
,[-R_{0},R_{0}]_{k,s_{2},n}}+ \\ 
+||b||_{\infty ,[-1,1]^{k,s_{2},n}}%
\end{array}%
\right) \cdot \\
&&\cdot \varrho (z,[-1,1]) \\
&\leq &\Lambda (s_{2})s_{2}(n+1)^{-\frac{1}{k}} \\
&<&s_{2}(n+1)^{\frac{-1}{k}}
\end{eqnarray*}%
Thence the assertion $\mathcal{P}(n+1)$ is true. Consequently $\mathcal{P}%
(n) $ is true for all $n\in 
%TCIMACRO{\U{2115} }%
%BeginExpansion
\mathbb{N}
%EndExpansion
^{\ast }.$

The proof of the proposition is then complete.
\end{proof}

By virtue of the proposition $3.2.$, we have for all $n\in 
%TCIMACRO{\U{2115} }%
%BeginExpansion
\mathbb{N}
%EndExpansion
^{\ast }$ and $z\in \lbrack -1,1]^{k,s_{2},n+1}:$%
\begin{eqnarray*}
&&\left \vert \omega _{n+1}(z)-\omega _{n}(z)\right \vert \\
&\leq &\frac{\left \vert z+1\right \vert ^{\alpha }}{\Gamma (\alpha )}%
\int_{0}^{1}\left \vert 1-s\right \vert ^{\alpha -1}\left \vert
a(-1+s(z+1))\right \vert \cdot \\
&&\cdot \left \vert \Phi (\omega _{n}(\psi (-1+s(z+1))))-\Phi (\omega
_{n-1}(\psi (-1+s(z+1))))\right \vert ds \\
&\leq &\frac{\left( 2+s_{2}\right) ^{\alpha }}{\alpha \Gamma (\alpha )}\left
\Vert a\right \Vert _{\infty ,[-1,1]^{k,s_{5},n+1}}||\Phi ^{\prime
}||_{\infty ,[-R_{0},R_{0}]_{k,s_{2},n+1}}||\omega _{n}-\omega
_{n-1}||_{\infty ,[-1,1]^{k,s_{2},n}} \\
&\leq &\Lambda (s_{2})||\omega _{n}-\omega _{n-1}||_{\infty
,[-1,1]^{k,s_{2},n}}
\end{eqnarray*}%
It follows that $:$ 
\begin{equation*}
\left \Vert \omega _{n+1}-\left( \omega _{n|[-1,1]^{k,s_{2},n+1}}\right)
\right \Vert _{\infty ,[-1,1]^{k,s_{2},n+1}}\leq \frac{||\omega _{2}-\omega
_{1}||_{\infty ,[-1,1]^{k,s_{2},1}}}{\Lambda (s_{2})}\Lambda (s_{2})^{n},%
\text{ }n\in 
%TCIMACRO{\U{2115} }%
%BeginExpansion
\mathbb{N}
%EndExpansion
^{\ast }
\end{equation*}%
Let us set $\Omega _{1}:=\omega _{1}$ and denote, for all $n\in 
%TCIMACRO{\U{2115} }%
%BeginExpansion
\mathbb{N}
%EndExpansion
^{\ast }\backslash \left \{ 1\right \} ,$ by $\Omega _{n}$ the function :%
\begin{equation*}
\begin{array}{cccc}
\Omega _{n}: & [-1,1]^{k,s_{2},n\text{ }+1} & \rightarrow & 
%TCIMACRO{\U{2102} }%
%BeginExpansion
\mathbb{C}
%EndExpansion
\\ 
& z & \mapsto & \omega _{n+1}\left( z\right) -\omega _{n}\left( z\right)%
\end{array}%
\end{equation*}%
Then the function $\Omega _{n}$ is holomorphic on $[-1,1]^{k,s_{2},n\text{ }%
+1}$ for each $n\in 
%TCIMACRO{\U{2115} }%
%BeginExpansion
\mathbb{N}
%EndExpansion
^{\ast }.$ Furthermore the following relations hold for every $n\in 
%TCIMACRO{\U{2115} }%
%BeginExpansion
\mathbb{N}
%EndExpansion
^{\ast }:$%
\begin{equation}
\left \{ 
\begin{array}{c}
||\Omega _{n}||_{\infty ,[-1,1]^{k,s_{2},n+1}}\leq \frac{||\omega
_{2}-\omega _{1}||_{\infty ,[-1,1]^{k,s_{2},1}}}{\Lambda (s_{2})}(\Lambda
(s_{2}))^{n} \\ 
\  \Omega _{n|[-1,1]}=f_{n-1}%
\end{array}%
\right.  \label{ASSUMPT}
\end{equation}%
Since $\Lambda (s_{2})\in \lbrack 0,1[,$ it follows then from (\ref{ASSUMPT}%
) that the function series $\sum \Omega _{n}{}_{|[-1,1]}$ is uniformly
convergent on $[-1,1]$ to the function $u.$ But we know, according to the
relation (\ref{inclusion}) of remark $2.1.,$ that the following inclusions
hold $:$%
\begin{equation*}
\lbrack d,1]_{_{k,s_{2},n}}\subset \lbrack -1,1]^{k,s_{2},n},n\in 
%TCIMACRO{\U{2115} }%
%BeginExpansion
\mathbb{N}
%EndExpansion
^{\ast }
\end{equation*}%
it follows that :%
\begin{equation}
\left \{ 
\begin{array}{c}
||\Omega _{n}||_{\infty ,[d,1]_{_{k,s_{2},n}}}\leq \frac{||\omega
_{2}-\omega _{1}||_{\infty ,[-1,1]^{k,s_{2},1}}}{\Lambda (s_{2})}(\Lambda
(s_{2}))^{n},\text{ }n\in 
%TCIMACRO{\U{2115} }%
%BeginExpansion
\mathbb{N}
%EndExpansion
^{\ast } \\ 
\  \Omega _{n|[d,1]}=f_{n-1|[d,1]},\text{ }n\in 
%TCIMACRO{\U{2115} }%
%BeginExpansion
\mathbb{N}
%EndExpansion
^{\ast }%
\end{array}%
\right.  \label{ASSUMPT;NEW}
\end{equation}%
The relations (\ref{ASSUMPT;NEW}) entail, thanks to the main result of (\cite%
{BEND}), that $u_{|[d,1]}$ belongs to the Gevrey class $G_{k}([d,1]).$ But
since $d$ is an arbitrary element of $\  \left] -1,1\right[ $ and $u$ is of
class $C^{1}$ on $\left[ -1,1\right] ,$ it follows that $u$ belongs to the
Gevrey class\textbf{\ }$G_{k,-1}(\left[ -1,1\right] ).$

The proof of the main result is then complete.

\section{\textbf{Examples}}

To obtain examples illustrating our main result, we need first to prove the
following proposition.

\begin{prop}
\textit{The function }$:$ 
\begin{equation*}
\begin{array}{cccc}
\mathcal{L}: & 
%TCIMACRO{\U{2102} }%
%BeginExpansion
\mathbb{C}
%EndExpansion
& \rightarrow & 
%TCIMACRO{\U{2102} }%
%BeginExpansion
\mathbb{C}
%EndExpansion
\\ 
& z & \longmapsto & 2e^{\frac{z-1}{2}}-1%
\end{array}%
\end{equation*}

\textit{satisfies the property }$\mathcal{S}(l)$\textit{\ for every }$l\in
]0,1].$
\end{prop}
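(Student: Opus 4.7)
The plan is to verify directly each part of the definition of the property $\mathcal{S}(l)$ for the entire function $\mathcal{L}$. Holomorphy on $[-1,1]^{r}$ for every $r\in]0,\pi[$ and the $C^{1}$ regularity of $\mathcal{L}_{|[-1,1]}$ are immediate, and $\mathcal{L}'(t)=e^{(t-1)/2}>0$ together with $\mathcal{L}(\pm 1)\in\{2e^{-1}-1,1\}\subset[-1,1]$ yields $\mathcal{L}([-1,1])\subset[-1,1]$. The nontrivial task is to produce $\tau_{\mathcal{L}}\in]0,\pi[$ and, for each $D\in]0,\tau_{\mathcal{L}}]$, an integer $N_{l,\mathcal{L}}(D)$ such that the inclusion $\mathcal{L}([-1,1]^{l,D,n+1})\subset[-1,1]^{l,D,n}$ holds for all $n\geq N_{l,\mathcal{L}}(D)$.

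For this I would parametrize an arbitrary $z\in[-1,1]^{l,D,n+1}$ as $z=-1+se^{i\theta}$ with $0<s<2+D(n+1)^{-1/l}$ and $|\theta|<D(n+1)^{-1/l}$, and pass to polar coordinates via
$$\mathcal{L}(z)+1=2e^{-1}\exp\bigl((s/2)e^{i\theta}\bigr)=Se^{i\Theta},\qquad S:=2e^{-1}e^{(s/2)\cos\theta},\quad\Theta:=(s/2)\sin\theta.$$
Since $S>0$ and (for $n$ large) $|\Theta|<\pi$, the membership $\mathcal{L}(z)\in[-1,1]^{l,D,n}$ is equivalent to the two inequalities $S<2+Dn^{-1/l}$ and $|\Theta|<Dn^{-1/l}$. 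Combining $\cos\theta\leq 1$ with the elementary bound $e^{x}\leq 1+x+x^{2}$ valid on $[0,1]$ yields $S\leq 2+D(n+1)^{-1/l}+\tfrac{D^{2}}{2}(n+1)^{-2/l}$, while $|\sin\theta|\leq|\theta|$ and the bound on $s$ give the same estimate for $|\Theta|$. Both required inequalities therefore collapse to the single arithmetic condition
$$\frac{D}{2}(n+1)^{-2/l}<n^{-1/l}-(n+1)^{-1/l}. \qquad (\ast)$$

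To prove $(\ast)$ I would apply the mean value theorem to $x\mapsto x^{-1/l}$ on $[n,n+1]$, giving $n^{-1/l}-(n+1)^{-1/l}\geq\tfrac{1}{l}(n+1)^{-1/l-1}$, so that $(\ast)$ follows from $(n+1)^{1-1/l}<2/(Dl)$. The hypothesis $l\leq 1$ enters decisively here: it forces $1-1/l\leq 0$, hence $(n+1)^{1-1/l}\leq 1$ for every $n\geq 1$ (with strict decay to $0$ when $l<1$). Choosing therefore any $\tau_{\mathcal{L}}\in]0,2[$ (for instance $\tau_{\mathcal{L}}=1$) works uniformly in $l\in]0,1]$, with $N_{l,\mathcal{L}}(D)$ taken large enough to make the bound $e^{x}\leq 1+x+x^{2}$ applicable and, when $l<1$, to absorb the decay of $(n+1)^{1-1/l}$. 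The main delicate point is the borderline case $l=1$, where $(\ast)$ is tight and genuinely forces $D<2$; apart from this, the argument is a routine polar computation for the exponential map.
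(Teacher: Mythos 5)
Your proof is correct and follows essentially the same route as the paper's: you bound the modulus and the argument of $\mathcal{L}(z)+1$ separately to obtain an inclusion of the form $\mathcal{L}([-1,1]^{\varepsilon})\subset[-1,1]^{\varepsilon+c\varepsilon^{2}}$, and then verify the arithmetic recursion $\varepsilon_{n+1}+c\varepsilon_{n+1}^{2}\leq \varepsilon_{n}$ for $\varepsilon_{n}=Dn^{-1/l}$, invoking $l\leq 1$ at exactly the same point. Your mean-value-theorem handling of that last step is in fact tidier than the paper's asymptotic-equivalence argument (it gives an explicit, essentially uniform $N_{l,\mathcal{L}}(D)$ and the constant $c=\tfrac{1}{2}$ in place of $\tfrac{3}{2}\tan 1-1$), but the underlying idea is identical.
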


\begin{proof}
\bigskip Let $l\in ]0,1],$ $\varepsilon \in ]0,1]$ and $z\in \lbrack
-1,1]^{\varepsilon }.$ We have : 
\begin{equation*}
\mathcal{L}(z)+1=2e^{\frac{z-1}{2}}
\end{equation*}%
It follows that :%
\begin{equation*}
\func{Re}\left( \mathcal{L}(z)+1\right) >0
\end{equation*}%
We consider then the principal argument $\arg (\mathcal{L}(z)+1)$ of $%
\mathcal{L}(z)+1$ which satisfies the following estimates :%
\begin{equation*}
|\arg (\mathcal{L}(z)+1)|=\frac{|\func{Im}(z)|}{2}\leq \left( 1+\frac{%
\varepsilon }{2}\right) \tan \varepsilon
\end{equation*}%
But direct computations prove that :%
\begin{equation*}
0<\frac{\tan \varepsilon -\left( \varepsilon +\frac{\varepsilon ^{3}}{3}%
\right) }{\varepsilon ^{3}}<\tan 1-\frac{4}{3}
\end{equation*}%
Thence we have :%
\begin{eqnarray*}
\left( 1+\frac{\varepsilon }{2}\right) \tan \varepsilon &\leq &\left( 1+%
\frac{\varepsilon }{2}\right) \left( \varepsilon +\frac{\varepsilon ^{3}}{3}%
+\left( \tan 1-\frac{4}{3}\right) )\varepsilon ^{3}\right) \\
&\leq &\varepsilon +\left( \frac{3}{2}\tan 1-1\right) \varepsilon ^{2}
\end{eqnarray*}%
It follows that :%
\begin{equation}
\arg (\mathcal{L}(z)+1)\leq \varepsilon +\left( \frac{3}{2}\tan 1-1\right)
\varepsilon ^{2}  \label{one}
\end{equation}%
On the other hand we have :%
\begin{eqnarray*}
|\mathcal{L}(z)+1| &=&2e^{\frac{\func{Re}(z)-1}{2}} \\
&\leq &2e^{\frac{\varepsilon }{2}}
\end{eqnarray*}%
But we know that :%
\begin{eqnarray*}
2e^{\frac{\varepsilon }{2}} &=&2+\varepsilon +\varepsilon ^{2}\underset{n=0}{%
\overset{+\infty }{\sum }}\frac{\varepsilon ^{n}}{(n+2)!2^{n+2}} \\
&\leq &2+\varepsilon +\left( \sqrt{e}-\frac{3}{2}\right) \varepsilon ^{2}
\end{eqnarray*}%
It follows that :%
\begin{equation}
|\mathcal{L}(z)+1|\leq 2+\varepsilon +\left( \sqrt{e}-\frac{3}{2}\right)
\varepsilon ^{2}  \label{two}
\end{equation}%
We derive, from the estimates (\ref{one}) and (\ref{two}), the following
inclusion :%
\begin{equation}
\mathcal{L}([-1,1]^{\varepsilon })\subset \lbrack -1,1]^{\varepsilon +\nu
\varepsilon ^{2}}  \label{INCLUS}
\end{equation}%
where : 
\begin{eqnarray*}
\mu &:&=\max \left( \frac{3}{2}\tan 1-1,\sqrt{e}-\frac{3}{2}\right) \\
&=&\frac{3}{2}\tan 1-1>0
\end{eqnarray*}%
Let $n\in 
%TCIMACRO{\U{2115} }%
%BeginExpansion
\mathbb{N}
%EndExpansion
^{\ast }$ and $A\in ]0,\frac{1}{\mu l}[.$ We have :%
\begin{eqnarray*}
&&A(n+1)^{\frac{-1}{l}}+\mu A^{2}(n+1)^{\frac{-2}{l}}-An^{\frac{-1}{l}} \\
&=&An^{\frac{-1}{l}}\left( (1+\frac{1}{n})^{\frac{-1}{l}}+\mu A\frac{n^{%
\frac{1}{l}}}{(n+1)^{\frac{2}{l}}}-1\right) \\
&\leq &An^{\frac{-1}{l}}\left( (1+\frac{1}{n})^{\frac{-1}{l}}+\frac{\mu A}{n}%
-1\right)
\end{eqnarray*}%
But we have :%
\begin{equation*}
\left( 1+\frac{1}{n}\right) ^{\frac{-1}{l}}+\frac{\mu A}{n}-1\underset{%
n\rightarrow +\infty }{\sim }\frac{\nu (A-\frac{1}{\mu l})}{n}
\end{equation*}%
It follows that there exists an integer $N_{A,l}\geq 1$ such that the
following inequality holds for every integer $n\geq N_{A,l}:$%
\begin{equation*}
A(n+1)^{\frac{-1}{l}}+\mu A^{2}(n+1)^{\frac{-2}{l}}\leq An^{\frac{-1}{l}}
\end{equation*}%
Consequently we have :%
\begin{equation*}
\mathcal{L}([-1,1]^{A(n+1)^{\frac{-1}{l}}})\subset \lbrack -1,1]^{An^{\frac{%
-1}{l}}},\text{ }n\geq N_{A,l}
\end{equation*}%
that is :%
\begin{equation*}
\mathcal{L}([-1,1]^{l,A,n+1})\subset \lbrack -1,1]^{l,A,n},\text{ }n\geq
N_{A,l}
\end{equation*}%
It follows that \text{the function satisfies the property\textit{\ }}%
\QTR{cal}{S}$(l).$

The proof of the proposition is then complete.
\end{proof}

\bigskip

\newpage

\bigskip

\begin{example}
\bigskip 
\end{example}

Let $C\in 
%TCIMACRO{\U{2102} }%
%BeginExpansion
\mathbb{C}
%EndExpansion
$ and $\gamma \in ]-1,1[.$ We assume that :%
\begin{equation}
0<|C|<\min \left( \frac{\alpha \Gamma (\alpha )}{2^{\alpha +1}e^{\frac{%
2^{\alpha }}{\alpha \Gamma (\alpha )}|\gamma |+1}},\text{ }\frac{1-|\gamma |%
}{2}\right)  \notag
\end{equation}%
Consider the FFDE :%
\begin{equation*}
(E^{1}):\text{ }^{c}D_{-1}^{\alpha }f(x)=C(x+1)\sin \left( f(2e^{\frac{x-1}{2%
}}-1)\right) +\gamma \sin (x+1)
\end{equation*}%
with the initial condition :%
\begin{equation*}
(E_{1}^{1}):f(-1)=0
\end{equation*}%
Consider then the entire functions :%
\begin{equation*}
\left \{ 
\begin{array}{c}
\Phi _{1}:z\mapsto \sin (z) \\ 
a_{1}:z\mapsto C(x+1) \\ 
b_{1}:z\mapsto \gamma \sin (z+1)%
\end{array}%
\right.
\end{equation*}%
Thence $a_{3}$ is not identically vanishing and we have :%
\begin{equation*}
|\Phi _{1}(z)|\leq e^{|z|},\text{ }z\in 
%TCIMACRO{\U{2102}}%
%BeginExpansion
\mathbb{C}%
%EndExpansion
\end{equation*}%
\textit{\ }So we can take $\alpha _{0}=\beta _{0}=1.$ Furthermore we have :%
\begin{eqnarray*}
\frac{2^{\alpha }}{\alpha \Gamma (\alpha )}||b_{1}||_{_{\infty ,[-1,1]}}+|0|%
\mathit{\ } &=&\frac{2^{\alpha }}{\alpha \Gamma (\alpha )}|\gamma | \\
&<&\ln \left( \frac{\alpha \Gamma (\alpha )}{e2^{\alpha +1}|C|}\right) \\
&=&\frac{\ln \left( \frac{\alpha \Gamma (\alpha )}{e\alpha _{0}\beta
_{0}2^{\alpha }\Vert a_{1}\Vert _{_{\infty ,[-1,1]}}}\right) }{\beta _{0}}
\end{eqnarray*}%
We have also :%
\begin{eqnarray*}
&&||a_{1}||_{\infty ,[-1,1]}||\Phi ^{\prime }||_{\infty ,\left[ -\frac{\ln 
\text{ }\left( \frac{\alpha \Gamma (\alpha )}{\alpha _{0}\beta _{0}2^{\alpha
}\Vert a_{1}\Vert _{\infty ,[-1,1]}}\right) }{\beta _{0}},\frac{\ln \text{ }%
\left( \frac{\alpha \Gamma (\alpha )}{\alpha _{0}\beta _{0}2^{\alpha }\Vert
a_{1}\Vert _{\infty ,[-1,1]}}\right) }{\beta _{0}}\right] } \\
&\leq &2|C|<\frac{\alpha \Gamma (\alpha )}{2^{\alpha }}
\end{eqnarray*}

\begin{eqnarray*}
&&||a_{1}||_{\infty ,[-1,1]}||\Phi ||_{\infty ,\left[ -\frac{\ln \text{ }%
\left( \frac{\alpha \Gamma (\alpha )}{\alpha _{0}\beta _{0}2^{\alpha }\Vert
a_{1}\Vert _{\infty ,[-1,1]}}\right) }{\beta _{0}},\frac{\ln \text{ }\left( 
\frac{\alpha \Gamma (\alpha )}{\alpha _{0}\beta _{0}2^{\alpha }\Vert
a_{1}\Vert _{\infty ,[-1,1]}}\right) }{\beta _{0}}\right] } \\
&&+||b_{1}||_{\infty ,[-1,1]} \\
&\leq &2|C|+|\gamma |<1
\end{eqnarray*}%
Consequently it follows from the main result that the problem $%
(E^{1})-(E_{1}^{1})$ has a solution which belongs to the Gevrey class $%
G_{1,-1}(\left[ -1,1\right] ).$

\bigskip

\newpage

\bigskip

\bigskip

\begin{example}
\end{example}

Let $\eta >0$ and $\lambda \in 
%TCIMACRO{\U{2102} }%
%BeginExpansion
\mathbb{C}
%EndExpansion
.$ We assume that : 
\begin{equation}
\eta <\min \left( \frac{\alpha \Gamma (\alpha )}{2^{\alpha }e^{|\lambda |+1}}%
,\text{ }1\right)  \notag
\end{equation}%
Consider the FFDE :%
\begin{equation*}
(E^{2}):\text{ }^{c}D_{-1}^{\alpha }(x)=\eta \sin (x+1)\cos \left( f(2e^{%
\frac{x-1}{2}}-1)\right)
\end{equation*}%
with the initial condition :%
\begin{equation*}
(E_{1}^{2}):\text{\ }f(-1)=\lambda
\end{equation*}%
Consider then the functions :%
\begin{equation*}
\left \{ 
\begin{array}{c}
\Phi _{2}:z\mapsto \cos z \\ 
a_{2}:z\mapsto \eta \sin (z+1) \\ 
b_{2}:z\mapsto 0%
\end{array}%
\right.
\end{equation*}%
Thence we have :%
\begin{equation*}
\left \{ 
\begin{array}{c}
a_{2}([-1,1])\subset 
%TCIMACRO{\U{211d} }%
%BeginExpansion
\mathbb{R}
%EndExpansion
^{\ast } \\ 
|\Phi _{2}(z)|\leq e^{|z|},\text{ }z\in 
%TCIMACRO{\U{2102}}%
%BeginExpansion
\mathbb{C}%
%EndExpansion
\end{array}%
\right.
\end{equation*}%
So we can take $\alpha _{0}=\beta _{0}=1.$ Furthermore we have the following
inequalities :\textit{\ }$\  \  \  \  \ $%
\begin{eqnarray*}
\frac{2^{\alpha }}{\alpha \Gamma (\alpha )}||b_{2}||_{_{\infty
,[-1,1]}}+|\lambda | &=&\ln \left( \frac{\alpha \Gamma (\alpha )}{e2^{\alpha
}\eta }\right) \\
\mathit{\ } &<&\frac{\ln \left( \frac{\alpha \Gamma (\alpha )}{e\alpha
_{0}\beta _{0}2^{\alpha }\Vert a_{2}\Vert _{_{\infty ,[-1,1]}}}\right) }{%
\beta _{0}}
\end{eqnarray*}

\begin{eqnarray*}
||a_{2}||_{\infty ,[-1,1]}||\Phi _{2}^{\prime }||_{\infty ,\left[ -\frac{\ln 
\text{ }\left( \frac{\alpha \Gamma (\alpha )}{\alpha _{0}\beta _{0}2^{\alpha
}\Vert a_{2}\Vert _{\infty ,[-1,1]}}\right) }{\beta _{0}},\frac{\ln \text{ }%
\left( \frac{\alpha \Gamma (\alpha )}{\alpha _{0}\beta _{0}2^{\alpha }\Vert
a_{2}\Vert _{\infty ,[-1,1]}}\right) }{\beta _{0}}\right] } &\leq &\eta \\
&<&\frac{\alpha \Gamma (\alpha )}{2^{\alpha }}
\end{eqnarray*}

\begin{equation*}
||a_{2}||_{\infty ,[-1,1]}||\Phi _{2}||_{\infty ,\left[ -\frac{\ln \text{ }%
\left( \frac{\alpha \Gamma (\alpha )}{\alpha _{0}\beta _{0}2^{\alpha }\Vert
a_{2}\Vert _{\infty ,[-1,1]}}\right) }{\beta _{0}},\frac{\ln \text{ }\left( 
\frac{\alpha \Gamma (\alpha )}{\alpha _{0}\beta _{0}2^{\alpha }\Vert
a_{2}\Vert _{\infty ,[-1,1]}}\right) }{\beta _{0}}\right] }+||b_{2}||_{%
\infty ,[-1,1]}=\eta <1
\end{equation*}%
Consequently it follows from the main result that the problem $%
(E^{2})-(E_{1}^{2})$ has a solution which belongs to the Gevrey class $%
G_{1,-1}(\left[ -1,1\right] ).$

\bigskip

\smallskip

\bigskip

\bigskip

\end{document}